\title{Computing the Fixed Field of $\mbox{Aut}(\mathbb{F}(x)/\mathbb{F})$}
\author{Richard Mandel\thanks{This work was supported by the Barnett and Jean Hollander Rich Summer Mathematics Internship, and supervised by Professor Alice Medvedev, to whom we are indebted for her invaluable guidance.}
       }
\newtheorem{theorem}{Theorem}
\newtheorem{lemma}{Lemma}
\newtheorem{fact}{Fact}
\newtheorem{example}{Example}
\newenvironment{proof}{{\sc Proof:}}{~\hfill QED}
\newenvironment{proof1}{{\sc Proof of Theorem 1:}}{~\hfill QED}
\newenvironment{proof2}{{\sc Proof of Theorem 2:}}{~\hfill QED}
\begin{document}
\newpage
\maketitle



\section{Introduction}\label{intro}

For a finite field $\mathbb{F}$, it is a basic result of Galois theory that $\mathbb{F}(x)/\mathbb{F}$ is not a Galois extension.\footnote{While some authors consider only algebraic Galois extensions, here (in the transcendental case) we apply the more general definition used in \cite{hun80}, whereby an extension $K\subset F$ is Galois if the fixed field of $\text{Aut}(F/K)$ is $K$ itself.} In other words, if $G = \mbox{Aut}(\mathbb{F}(x)/\mathbb{F})$ and $E$ is the fixed field of $G$, then $\mathbb{F} \subsetneq E$ is a proper extension. This follows from the fact that $G$ is finite, which implies (by a well known theorem due to Artin \cite[p.~252]{hun80}) that 
\begin{eqnarray}\label{artin}
	[\mathbb{F}(x):E] = |G|,
\end{eqnarray}
whereas $[\mathbb{F}(x):\mathbb{F}] = \infty$.\footnote{This fact is the answer to an exercise in Thomas Hungerford's $\emph{Algebra}$, which motivates the present work\cite[p.~256, ex.\:9b]{hun80}. Thanks are due to Professor Khalid Bou-Rabee, in whose class the exercise was encountered.} However, constructing $E$ for a general finite field is more difficult than establishing this fact.\\

In this expository $  $paper, we construct the fixed field $E$ of $\mbox{Aut}(\mathbb{F}(x)/\mathbb{F})$ for all finite fields $\mathbb{F}$, demonstrating that it is an extension of the form $\mathbb{F}(f)$, where $f\in \mathbb{F}(x)$ is explicitly determined. The generator $f$ is shown to be easily computable in all cases using a simple formula.\\

A more concise elementary construction of $E$, due to Paul Rivoire, may be found in \cite{paul}, in which the fixed fields of the subgroups of $\mbox{Aut}(\mathbb{F}(x)/\mathbb{F})$ are also constructed. However, our exposition may be more accessible to the modern undergraduate student, and presents the generator $f$ in a simple summation formula not given in \cite{paul}. One can also construct $E$ using more sophisticated machinery from the theory of algebraic function fields; for insight into this approach, we refer the reader to \cite[p.~237, ex.\:6.9]{henning}.\\ 

The strategy will to be to look for an element $f \in E$ such that \linebreak $[\mathbb{F}(x):\mathbb{F}(f)] = |G|$, implying that $\mathbb{F}(f) = E$ by equation \eqref{artin}. 

\section{Definitions and Notation}\label{defs}
The following will hold throughout.
\begin{enumerate}
	\item \label{defm}Let $q = p^n$, where $p$ is prime and $n \in \mathbb{N}$.
	
	\item The field $\mathbb{F}$ will always refer to $\mathbb{F}_q$, the unique (up to isomorphism) field containing $q$ elements.
	
	\item For a field $F$, $F(x)$ (resp. $F[x]$) denotes the field of rational functions (resp. ring of polynomials) in one indeterminate over $F$. The symbol `x' will only be used in this context.
	
	\item For fields $K\subset F$, $\mbox{Aut}(F/K)$ denotes the group of automorphisms of $F$ which fix $K$. Let $G = \mbox{Aut}(\mathbb{F}(x)/\mathbb{F})$.
	
	\item Let $E$ denote the fixed field of $G$.
	
	\item For $k\in \mathbb{N}$, let $f_k\in \mathbb{F}(x)$ be defined as follows: 
	
	\[f_k := \sum_{\varphi \in G} \varphi(x)^k.\]
\end{enumerate}

\begin{example}
	\em For $\mathbb{F} = \mathbb{F}_2$, 
	$$f_k = x^k + (x+1)^k + \frac{1}{x^k} + \frac{1}{(x+1)^k} + \left (\frac{x+1}{x}\right )^k + \left (\frac{x}{x+1}\right )^k$$
	(see Fact \ref{aut} below for a characterization of the elements of $G$).
\end{example}

\section{Main Results}

For any $\sigma \in G$, $k\in \mathbb{N}$ we have
$$\sigma(f_k) = \sigma \left ( \sum_{\varphi \in G} \varphi(x)^k \right ) = \sum_{\varphi \in G} [\sigma\! \circ\! \varphi(x)]^k = \sum_{\varphi \in G} \varphi(x)^k = f_k$$

so that $f_k \in E$. It is thus reasonable to inquire as to whether $f_k$ generates $E$ for a suitable $k$. In the following result, we will show that this is indeed the case.
\begin{theorem}\label{thm1}
	Let $m = q^2-1$. Then
	\[E = \mathbb{F}(f_m)\]
	(where $E,\mathbb{F}$ and $f_k$ are as defined in section \ref{defs})
\end{theorem}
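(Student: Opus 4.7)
The plan is to apply equation \eqref{artin}: since $f_m \in E$ (verified just above), $\mathbb{F}(f_m) \subseteq E$, giving $[\mathbb{F}(x):\mathbb{F}(f_m)] \geq [\mathbb{F}(x):E] = |G|$, and it suffices to prove the reverse inequality. Using the classical identity $[\mathbb{F}(x):\mathbb{F}(g)] = \max(\deg P, \deg Q)$ for $g = P/Q$ in lowest terms, together with the standard fact that this maximum equals the total pole order of $g$ on $\mathbb{P}^1$, the theorem reduces to showing that the total pole order of $f_m$ does not exceed $|G| = q^3 - q$.

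I would proceed by localizing the poles. Each summand $\varphi(x)^m = ((ax+b)/(cx+d))^m$ has its only finite pole at $-d/c \in \mathbb{F}$ when $c \neq 0$ (and no finite pole when $c = 0$), so every pole of $f_m$ lies in $\mathbb{P}^1(\mathbb{F}) = \mathbb{F} \cup \{\infty\}$. Next, because $f_m$ is $G$-invariant and $G \cong PGL_2(\mathbb{F})$ acts transitively on $\mathbb{P}^1(\mathbb{F})$, the pole order of $f_m$ is constant on $\mathbb{P}^1(\mathbb{F})$, so the total pole order equals $(q+1)$ times the pole order at $\infty$.

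The main computation is thus to determine the pole order of $f_m$ at $\infty$. Only the $q(q-1)$ summands indexed by the affine subgroup $B = \{x \mapsto \alpha x + \beta : \alpha \in \mathbb{F}^*,\ \beta \in \mathbb{F}\}$ can contribute, since for $\varphi \notin B$, $\varphi(x)$ tends to a finite limit as $x \to \infty$. The binomial expansion
\[\sum_{\alpha \in \mathbb{F}^*,\, \beta \in \mathbb{F}}(\alpha x + \beta)^m \;=\; \sum_{k=0}^m \binom{m}{k}\Bigl(\sum_{\alpha \in \mathbb{F}^*}\alpha^k\Bigr)\Bigl(\sum_{\beta \in \mathbb{F}}\beta^{m-k}\Bigr)x^k,\]
combined with the character-sum identities $\sum_{\alpha \in \mathbb{F}^*}\alpha^k = -1$ if $(q-1)\mid k$ and $\sum_{\beta \in \mathbb{F}}\beta^j = -1$ if $(q-1)\mid j$ and $j > 0$ (both vanishing otherwise), shows that surviving monomials occur precisely at $k \in \{0, q-1, 2(q-1), \ldots, q(q-1)\}$. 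The leading term sits at $k = q(q-1) = q^2 - q$ with coefficient $\binom{q^2-1}{q-1}$, which equals $1$ in $\mathbb{F}_p$ by Lucas's theorem; hence the pole order at $\infty$ is exactly $q^2 - q$.

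The principal obstacle is precisely this leading-coefficient calculation: because $q \equiv 0 \pmod p$ forces enormous cancellation in the character sums, one must verify that the maximal-$k$ contribution genuinely survives. The choice $m = q^2 - 1$ is exactly the exponent that makes both $\alpha^m = 1$ on $\mathbb{F}^*$ and $\binom{m}{q^2-q}$ nonzero mod $p$. Multiplying the pole order at $\infty$ by the $q+1$ points of $\mathbb{P}^1(\mathbb{F})$ yields $\deg f_m = (q+1)(q^2 - q) = q^3 - q = |G|$, completing the argument.
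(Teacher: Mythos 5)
Your proof is correct, but it reaches the degree count by a genuinely different route from the paper's. The shared core is the character-sum analysis of the affine part $\sum_{\alpha\neq 0,\,\beta}(\alpha x+\beta)^m$, which is essentially the content of section \ref{subsec} and Lemma \ref{degreelem}. Where you diverge: you compute $[\mathbb{F}(x):\mathbb{F}(f_m)]$ \emph{exactly}, as the total pole order of $f_m$ on $\mathbb{P}^1$, using two further ingredients --- the $G$-invariance of $f_m$ together with transitivity of $\mbox{PGL}_2(\mathbb{F})$ on $\mathbb{P}^1(\mathbb{F})$ to force equal pole order $q^2-q$ at all $q+1$ rational points (all poles being rational because every summand's denominator splits over $\mathbb{F}$), and Lucas' theorem to certify that the top surviving coefficient $\binom{q^2-1}{q-1}$ is $1$ in characteristic $p$. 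The paper avoids any exact computation: by Artin's theorem (Facts \ref{artinian} and \ref{div}) the degree over $\mathbb{F}$ of any nonconstant element of $E$ is a multiple of $|G|$, so it suffices to bound $[\mathbb{F}(x):\mathbb{F}(f_m)]$ strictly below $2|G|$ (Lemma \ref{cond}); that requires neither coprimality of numerator and denominator, nor any symmetry argument, nor the precise value of the leading coefficient --- only that some monomial of degree exceeding that of $(x^q-x)^m$ survives the cancellation. Your method costs a bit more background (degree equals pole-divisor degree, invariance of orders under M\"obius substitution, Lucas), but it buys more: it identifies the full pole divisor --- a pole of order $q(q-1)$ at every point of $\mathbb{P}^1(\mathbb{F})$ --- which is exactly the denominator $(x^q-x)^{q(q-1)}$ of Theorem \ref{formula}, information the paper only recovers later through Lemmas \ref{AB}--\ref{gh}. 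Two small points to tidy if you write this up: the identification of $\max\{\deg P,\deg Q\}$ with total pole order should note that it is legitimate to count only $\mathbb{F}$-rational points here precisely because you first showed all poles are rational; and your word ``precisely'' for the surviving exponents $k\in\{0,q-1,\dots,q(q-1)\}$ tacitly uses $\binom{m}{k}\not\equiv 0 \pmod p$ for those $k$ (true by Lucas, since every base-$p$ digit of $m=p^{2n}-1$ is $p-1$), though for the pole order only the top term matters.
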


While this theorem provides a generator for $E$ over $\mathbb{F}$, it will soon be clear that the definition of $f_m$ in section \ref{defs} is not practical for computation (at least in all but the smallest finite fields). The other main result addresses this with a surprisingly simple formula for $f_m$:
\begin{theorem}\label{formula}
	Let $$\theta_i = \begin{cases} &2 \mbox{ for }i = q,2q,\ldots ,q^2 \\ &1 \mbox{ otherwise}\end{cases}.$$
	and let
	\begin{eqnarray*}
		g &=&  \sum_{i = 0}^{q(q+1)} \theta_i x^{i(q-1)}\\
		h &=& \sum_{i = 1}^q x^{iq(q-1)}.
	\end{eqnarray*}
	Then $g,h$ are relatively prime and $f_m = g/h$.
\end{theorem}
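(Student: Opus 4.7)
My plan is to compute $f_m$ directly as a sum of rational functions and match the result to $g/h$. Using the standard identification of $G$ with $\mathrm{PGL}_2(\mathbb{F}_q)$, I partition the $q^3 - q$ elements of $G$ into affine maps $\varphi(x) = ax + b$ (with $a \in \mathbb{F}_q^*$, $b \in \mathbb{F}_q$) and the rest, which after normalizing the denominator may be written as $\varphi(x) = a + c/(x + d)$ with $a, d \in \mathbb{F}_q$ and $c \in \mathbb{F}_q^*$. Writing $f_m = S_1 + S_2$ for the corresponding partial sums, I aim to evaluate each in closed form.

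The central computational tool is the polynomial identity
\[
\sum_{b \in \mathbb{F}_q}(t + b)^m \;=\; -\sum_{j=0}^{q} t^{j(q-1)},
\]
which I would derive via the binomial theorem from (i) the classical vanishing of $\sum_b b^j$ unless $(q-1) \mid j \geq 1$, and (ii) the congruence $\binom{q^2-1}{j(q-1)} \equiv 1 \pmod p$, a consequence of the characteristic-$p$ expansion $(1+t)^{q^2-1} \equiv \sum_i (-1)^i t^i$. Specializing $t = ax$ and summing over $a \in \mathbb{F}_q^*$ (using $a^{q-1} = 1$) instantly yields $S_1 = \sum_{j=0}^{q} x^{j(q-1)}$. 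Specializing $t = c/(x+d)$ and summing over $a \in \mathbb{F}_q$ and $c \in \mathbb{F}_q^*$ reduces $S_2$ to $\sum_{j=1}^{q} \sum_{d \in \mathbb{F}_q}(x+d)^{-j(q-1)}$.

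The main obstacle is evaluating the inner sum $\sum_d (x+d)^{-j(q-1)}$: direct partial-fraction or iterated-differentiation approaches break down in characteristic $p$. My workaround uses the Frobenius identity $(x+d)^{q-1} = (x^q + d)/(x + d)$ to rewrite $(x+d)^{-j(q-1)}$ as the $j$-th power of $r_d := (x+d)/(x^q + d)$. The sum over $j$ is then a finite geometric series in $r_d$, and the resulting sum over $d$ collapses via the logarithmic-derivative identity $\sum_d (x + d)^{-1} = -(x^q - x)^{-1}$ (itself coming from $\prod_d (x - d) = x^q - x$ together with $(x^q - x)' = -1$ in characteristic $p$). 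After some Frobenius-aided simplification I obtain $S_2 = (1 + u^{q-1})/u^{q(q-1)}$ with $u := x^q - x$.

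Combining $S_1$ and $S_2$ over the common denominator $h = u^{q(q-1)}$ exhibits $f_m$ as a ratio; it remains to identify the numerator with $g$. Substituting $v := x^{q-1}$ and applying $(v - 1)^q = v^q - 1$ together with $\binom{q-1}{k} \equiv (-1)^k \pmod p$ reduces everything to a polynomial identity in $v$; coefficient matching then amounts to counting representations $i = qk + j$ with $1 \leq k \leq q$ and $0 \leq j \leq q$, which gives multiplicity $2$ at $i \in \{2q, 3q, \ldots, q^2\}$, while the extra $1 + u^{q-1}$ term bumps $i = q$ up to multiplicity $2$ as well, matching $\theta_i$ exactly. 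Finally, coprimality of $g$ and $h$ is immediate: the roots of $h$ lie in $\mathbb{F}_q$, and $g(0) = \theta_0 = 1$, while for $\alpha \in \mathbb{F}_q^*$ one has $\alpha^{q-1} = 1$, so $g(\alpha) = \sum_i \theta_i = 2q + (q^2 + 1) = (q+1)^2 \equiv 1 \pmod p$.
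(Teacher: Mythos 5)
Your proposal is correct, and its core is genuinely different from the paper's. The opening move is the same in spirit as the paper's equations (\ref{kform}) and (\ref{tech1}) --- split $G$ into the affine substitutions and the rest, and exploit the identity $\sum_{b\in\mathbb{F}}(t+b)^{m}=-\sum_{j=0}^{q}t^{j(q-1)}$, which the paper obtains via Lemma \ref{AB} inside Lemma \ref{gh} in the equivalent form $1+\sum_{b}(x-b)^{m}=-(x^{q}-x)^{q-1}$, while you get it from $\binom{q^{2}-1}{k}\equiv(-1)^{k}\pmod p$; both derivations are fine, and you are careful about the one subtle point (the exponent-zero terms, where $\sum_{b}b^{0}=q=0$). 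Where you truly diverge is the non-affine part: instead of extracting coefficients of $\sum_{c}\bigl((x^{q}-x)/(x-c)\bigr)^{m}$ one exponent at a time and invoking the binomial-sum identity of Lemma \ref{binom} (the technical core of the paper's proof), you sum the geometric series in $r_{d}=(x+d)/(x^{q}+d)$ and collapse the $d$-sum with $\sum_{d}(y+d)^{-1}=-1/(y^{q}-y)$ at $y=x^{q^{2}}$, arriving at the closed form $S_{2}=(1+u^{q-1})/u^{q(q-1)}$ with $u=x^{q}-x$; I checked this evaluation and it is right. This bypasses Lemma \ref{binom} entirely, writes the numerator as $(1+u^{q-1})(1+u^{q(q-1)})$ so that the doubling pattern of $\theta_{i}$ falls out of counting representations $i=kq+j$, and your coprimality check --- the roots of $h=u^{q(q-1)}$ all lie in $\mathbb{F}_{q}$, while $g(0)=1$ and $g(\alpha)=\sum_{i}\theta_{i}=(q+1)^{2}\equiv 1$ for $\alpha\in\mathbb{F}_{q}^{*}$ --- is self-contained, whereas the paper deduces coprimality from Theorem \ref{thm1} and a degree count. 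A further payoff: your computation shows $\sum_{c}\bigl((x^{q}-x)/(x-c)\bigr)^{m}=1+(x^{q}-x)^{q-1}$, which quietly corrects a slip in the paper's own proof of Theorem \ref{formula}: there the diagonal terms, where $c^{\,i(q-1)-j}=c^{0}$ and the sum over $c$ is $q=0$, are counted as contributing $-1$, so the displayed value $-(x^{(q+1)(q-1)}+\cdots+x^{m(q-1)})$ is not right (already for $q=2$ the sum equals $x^{2}+x+1$, not $x^{3}$); with your value in its place, equation (\ref{g}) does expand to the stated $g$, so your route both proves the theorem and repairs that step.
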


Theorem \ref{formula} has the somewhat surprising consequence that $E\subset \mathbb{F}_p(x)$, since the coefficients of $f_m$ are shown to be either 1 or 2 (or just 1, in characteristic 2).\\

We postpone the proofs of these theorems in order to establish some preliminary facts and lemmas. Note that $m = q^2-1$ will be henceforth fixed.
\section{Facts and Lemmas}
We recall the following well known facts. In most cases, complete proofs are omitted in favor of a brief justification.
\begin{fact}\label{artinian}
	For any field extension $K \subset F$, if $H$ is a finite subgroup of $\mbox{Aut}(F/K)$ and $H'$ the fixed field of $H$, then
	$$[F:H'] = |H|.$$
\end{fact}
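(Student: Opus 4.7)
The plan is to prove the two inequalities $[F:H'] \leq |H|$ and $[F:H'] \geq |H|$ separately, which together force equality. Both inequalities are standard in proofs of Artin's theorem, and I would follow that classical route.

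For the upper bound $[F:H'] \leq |H|$, let $n = |H|$ and take any $n+1$ elements $u_1, \ldots, u_{n+1} \in F$; I would aim to show they are linearly dependent over $H'$. Consider the homogeneous system of $n$ equations in $n+1$ unknowns over $F$ given by $\sum_{j=1}^{n+1} \sigma(u_j)\,a_j = 0$, one equation for each $\sigma \in H$. Since there are more unknowns than equations, a nonzero solution $(a_1, \ldots, a_{n+1}) \in F^{n+1}$ exists. Among all such nonzero solutions, pick one with the fewest nonzero coordinates and, after reindexing, normalize so that $a_1 = 1$. The equation indexed by $\sigma = \mathrm{id}$ then already gives a dependence of the $u_j$; the task is to arrange that each $a_j$ lies in $H'$. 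If some $a_k \notin H'$, then there exists $\tau \in H$ with $\tau(a_k) \neq a_k$; applying $\tau$ to the entire system and using that $\tau$ permutes $H$ yields another nonzero solution with the same support. Subtracting, one obtains a shorter nonzero solution, contradicting minimality. Hence all $a_j \in H'$, giving the desired dependence.

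For the lower bound $[F:H'] \geq |H|$, the key input is Dedekind's lemma on the linear independence of distinct characters (equivalently, of distinct field automorphisms) over the target field. Suppose for contradiction that $m := [F:H'] < n$, and pick an $H'$-basis $v_1, \ldots, v_m$ of $F$. The system $\sum_{i=1}^m \sigma(v_i)\,b_\sigma = 0$ (indexed by $i$, with unknowns $b_\sigma$ ranging over $\sigma \in H$) is $m$ equations in $n > m$ unknowns, so has a nonzero solution $(b_\sigma)_{\sigma \in H}$. For any $\alpha = \sum_i c_i v_i \in F$ with $c_i \in H'$, a short computation shows $\sum_{\sigma} b_\sigma \sigma(\alpha) = 0$ (the $c_i$ pull out through each $\sigma$ because they lie in $H'$). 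This exhibits a nontrivial $F$-linear dependence among the distinct automorphisms in $H$, contradicting Dedekind.

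The genuinely delicate step is the minimal-support argument in the upper bound: one has to see why the element produced by subtracting $\tau$-transformed copies is both nonzero and strictly shorter. This requires careful bookkeeping of the supports and the normalization $a_1 = 1$; everything else is essentially dimension counting or a direct appeal to Dedekind's lemma.
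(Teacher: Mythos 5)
Your argument is correct, and it is the classical proof of Artin's theorem: the upper bound $[F:H'] \leq |H|$ via the minimal-support trick (any $|H|+1$ elements satisfy a dependence whose coefficients can be forced into the fixed field, since a coefficient moved by some $\tau \in H$ lets you subtract the $\tau$-transformed solution and shorten the support, the normalization $a_1 = 1$ guaranteeing the difference is shorter yet nonzero), and the lower bound via Dedekind's lemma on independence of distinct automorphisms. The delicate points you flag do go through: $\tau$ preserves the support, coordinate $1$ cancels because $\tau(1)=1$, and coordinate $k$ survives, so minimality is genuinely violated; and in the lower bound the hypothesis $[F:H'] < |H|$ already entails finiteness, so choosing a basis is legitimate (and the upper bound independently guarantees finiteness). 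The paper itself does not prove this fact at all: it simply remarks that it follows by ``identical reasoning'' to its equation for $[\mathbb{F}(x):E]$, which is in turn attributed to Artin's theorem as cited in Hungerford. So your write-up supplies exactly the argument hiding behind that citation, at the cost of invoking Dedekind's lemma as a black box, which is a reasonable and standard choice; nothing in your outline conflicts with or weakens what the paper needs (only the statement with $K$ the prime field and $F = \mathbb{F}(x)$ is ever used).
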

This is a generalization of equation \eqref{artin} in section \ref{intro}, which follows from identical reasoning.
\begin{fact}\label{div}
	For any $h\in E$, if $[\mathbb{F}(x):\mathbb{F}(h)] < \infty$, then $[\mathbb{F}(x):\mathbb{F}(h)]$ is divisible by $|G|$.
\end{fact}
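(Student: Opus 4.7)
The plan is a direct tower-law argument. First I would observe that since $h\in E$ and $\mathbb{F}\subseteq E$, we have the chain
$$\mathbb{F}\;\subseteq\;\mathbb{F}(h)\;\subseteq\;E\;\subseteq\;\mathbb{F}(x).$$
The key conceptual point is recognizing that the inclusion runs this way: $h$ lies in $E$ (not merely in $\mathbb{F}(x)$), so the subfield it generates over $\mathbb{F}$ is contained in $E$, and $E$ sits between $\mathbb{F}(h)$ and $\mathbb{F}(x)$ rather than outside the tower.

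Next, by multiplicativity of degrees,
$$[\mathbb{F}(x):\mathbb{F}(h)] \;=\; [\mathbb{F}(x):E]\cdot[E:\mathbb{F}(h)].$$
Applying Fact \ref{artinian} to the finite group $H=G$ itself (whose fixed field is $E$ by definition) gives $[\mathbb{F}(x):E]=|G|$, which is exactly equation \eqref{artin}. Substituting, we obtain
$$[\mathbb{F}(x):\mathbb{F}(h)] \;=\; |G|\cdot[E:\mathbb{F}(h)].$$
The hypothesis $[\mathbb{F}(x):\mathbb{F}(h)]<\infty$ forces $[E:\mathbb{F}(h)]$ to be a positive integer, so $|G|$ divides $[\mathbb{F}(x):\mathbb{F}(h)]$ as claimed.

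I do not anticipate any genuine obstacle in this argument: once the inclusion $\mathbb{F}(h)\subseteq E$ is in place, the statement reduces to the tower law combined with Fact \ref{artinian}. The finiteness of $G$, which is what makes Fact \ref{artinian} applicable with $H=G$, is part of the standing setup (it follows from $\mathbb{F}$ being finite, as already remarked in Section \ref{intro}).
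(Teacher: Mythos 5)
Your argument is correct and is essentially identical to the paper's own proof: both insert $E$ into the tower $\mathbb{F}(h)\subseteq E\subseteq \mathbb{F}(x)$, apply Fact \ref{artinian} (i.e.\ equation \eqref{artin}) to get $[\mathbb{F}(x):E]=|G|$, and conclude by multiplicativity of degrees. No changes needed.
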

This follows immediately from Fact \ref{artinian}, since  \[[\mathbb{F}(x):\mathbb{F}(h)] = [\mathbb{F}(x):E][E:\mathbb{F}(h)] = |G|[E:\mathbb{F}(h)].\]
\begin{fact}\label{frob}
	The map $\phi:\mathbb{F}(x) \rightarrow \mathbb{F}(x)$ given by $\phi(s) = s^p$ is an endomorphism of fields.
\end{fact}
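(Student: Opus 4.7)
The plan is to verify directly the three defining properties of a ring endomorphism for $\phi$, then observe that $\phi(1) = 1^p = 1$ so that $\phi$ is nonzero and hence automatically injective as a map of fields. Multiplicativity is trivial: since $\mathbb{F}(x)$ is commutative, $\phi(st) = (st)^p = s^p t^p = \phi(s)\phi(t)$ for all $s,t \in \mathbb{F}(x)$. The substantive content is additivity, which is the \emph{freshman's dream} in characteristic $p$.

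For additivity, I would apply the binomial theorem (valid in any commutative ring) to obtain
\[(s+t)^p = \sum_{k=0}^{p} \binom{p}{k} s^k t^{p-k},\]
and then observe that $\mathbb{F}(x)$ has characteristic $p$, since it contains the prime subfield $\mathbb{F}_p \subseteq \mathbb{F}_q = \mathbb{F}$. Hence every term whose coefficient is divisible by $p$ vanishes. It then suffices to show that $p \mid \binom{p}{k}$ for $1 \leq k \leq p-1$: writing $\binom{p}{k} = \frac{p!}{k!(p-k)!}$, the prime $p$ appears exactly once in the numerator and not at all in the denominator (since all factors in $k!(p-k)!$ are strictly less than $p$), so $p$ divides the binomial coefficient. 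Consequently only the $k=0$ and $k=p$ terms survive, yielding $(s+t)^p = s^p + t^p = \phi(s) + \phi(t)$.

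The only step that requires any real argument is the divisibility claim for the binomial coefficients, and this is a short elementary observation rather than a genuine obstacle. Everything else is immediate from commutativity and the definition of the characteristic.
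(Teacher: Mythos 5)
Your proposal is correct and follows essentially the same route as the paper: multiplicativity is immediate, and additivity comes from the binomial theorem together with the vanishing of $\binom{p}{k}$ in characteristic $p$ for $1 \leq k \leq p-1$. You simply spell out the divisibility of the middle binomial coefficients, which the paper asserts without proof.
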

It is clear that $\phi$ is an endomorphism of the multiplicative group of $\mathbb{F}(x)$, so it remains only to verify that $\phi$ is an endomorphism of the additive group. Observing that $\binom{p}{n} = 0$ for $n = 1,2,\ldots,p-1$, this follows immediately from an application of the binomial theorem to $(s+t)^p$. This is known as the Frobenius endomorphism. 
\begin{fact}\label{fermat}(Fermat's little theorem) For all $\alpha \in \mathbb{F}$, we have
	\[\alpha^q = \alpha\]
	If  $\alpha\neq 0$, we also have
	\[\alpha^{q-1} = 1\]
\end{fact}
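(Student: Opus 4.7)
The plan is to reduce the claim to Lagrange's theorem applied to the multiplicative group $\mathbb{F}^\times := \mathbb{F}\setminus\{0\}$. First I would observe that $\mathbb{F}^\times$ is a finite group of order $q-1$ under multiplication, since $\mathbb{F}$ has exactly $q$ elements by definition. By Lagrange's theorem, the order of any $\alpha \in \mathbb{F}^\times$ (as an element of this group) divides $|\mathbb{F}^\times| = q-1$, and hence $\alpha^{q-1} = 1$ for every such $\alpha$. This is exactly the second asserted identity.

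Next I would derive the first identity $\alpha^q = \alpha$ by splitting into two cases. If $\alpha \neq 0$, multiplying the equation $\alpha^{q-1} = 1$ through by $\alpha$ yields $\alpha^q = \alpha$ at once. If $\alpha = 0$, then $0^q = 0$ trivially. Combining the two cases gives $\alpha^q = \alpha$ for every $\alpha \in \mathbb{F}$.

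There is essentially no obstacle here: Fermat's little theorem over an arbitrary finite field is an immediate corollary of the group-theoretic fact that $|\mathbb{F}^\times| = q-1$ together with Lagrange's theorem. An alternative one-line route would be to invoke the characterization of $\mathbb{F}_q$ as the splitting field of $x^q - x$ over $\mathbb{F}_p$, from which $\alpha^q = \alpha$ for every $\alpha \in \mathbb{F}_q$ is immediate; but I would prefer the Lagrange argument, since it uses less machinery than has been assumed elsewhere in the paper and makes the second identity the primary object, which is the form actually used in the sequel.
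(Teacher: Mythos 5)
Your argument is correct: $|\mathbb{F}^\times| = q-1$ plus Lagrange's theorem gives $\alpha^{q-1}=1$ for $\alpha\neq 0$, and multiplying by $\alpha$ (together with the trivial case $\alpha=0$) gives $\alpha^q=\alpha$, exactly as claimed. The paper instead justifies the identity by citing the fact that the multiplicative group of a finite field is cyclic: if $y$ generates $\mathbb{F}^\times$, then every nonzero $\alpha$ is a power of $y$ and $\alpha^{q-1}=1$ follows from $y$ having order $q-1$. Your route is genuinely more elementary, since Lagrange's theorem is a much lighter tool than the cyclicity of $\mathbb{F}^\times$ (which requires a nontrivial argument about finite subgroups of the multiplicative group of a field). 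On the other hand, the paper's choice costs nothing in context: cyclicity is needed anyway in the very next fact (Fact \ref{sum}), whose proof sums a geometric series over the powers of a generator $y$, so the authors simply reuse the stronger statement they must invoke regardless. In short, your proof is a clean, self-contained alternative; the paper's is a shortcut leaning on machinery it already needs elsewhere.
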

The second identity, of which the first is an immediate consequence, follows from the fact that the multiplicative group of a finite field is cyclic \cite[p.~279]{hun80}. Since this result implies that each $\alpha \in \mathbb{F}$ is a root of $x^q-x$, we obtain the following corollary:
$$\prod_{\alpha \in \mathbb{F}} (x-\alpha) = x^q-x$$
\begin{fact}\label{sum} \[\sum_{\alpha \in \mathbb{F}} \alpha^k =  \begin{cases} -1 &\mbox{ if } (q-1)\big | k \\ \;\;\:0 &\mbox{ otherwise}\end{cases}\]
\end{fact}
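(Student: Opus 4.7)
The plan is to split into two cases based on whether $(q-1) \mid k$, and in each case to exploit the cyclic structure of the multiplicative group $\mathbb{F}^*$, which has order $q-1$.

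First I would handle the divisible case: suppose $(q-1) \mid k$. Separating the $\alpha = 0$ term (which contributes $0$ to the sum), Fact \ref{fermat} gives $\alpha^{q-1} = 1$ for each of the $q-1$ nonzero elements $\alpha$, so $\alpha^k = (\alpha^{q-1})^{k/(q-1)} = 1$. Summing over all nonzero $\alpha$ yields $q-1 = -1$ in $\mathbb{F}$.

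For the non-divisible case, let $g$ be a generator of the cyclic group $\mathbb{F}^*$, so that the nonzero elements of $\mathbb{F}$ are enumerated as $g^0, g^1, \ldots, g^{q-2}$. Then
\[\sum_{\alpha \in \mathbb{F}} \alpha^k = \sum_{j=0}^{q-2} (g^k)^j.\]
Since $g$ has order exactly $q-1$ and $(q-1) \nmid k$, we have $g^k \neq 1$, so this geometric sum evaluates to $\bigl((g^k)^{q-1} - 1\bigr)/(g^k - 1) = 0$, the numerator vanishing by Fermat's little theorem applied to $g^k$.

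There is no real obstacle here; the only subtlety is choosing the correct tool for each case (direct evaluation via Fermat in the divisible case, and the geometric series identity together with the existence of a generator of $\mathbb{F}^*$ in the other). The existence of such a generator is precisely the cyclicity of the multiplicative group cited in the justification of Fact \ref{fermat}.
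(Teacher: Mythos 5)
Your proof is correct and follows essentially the same route as the paper: in the divisible case both arguments reduce the sum to $q-1=-1$ over the nonzero elements, and in the non-divisible case both use a generator of the cyclic group $\mathbb{F}^*$ together with the geometric series, with the numerator killed by Fact \ref{fermat} and the denominator nonzero because $(q-1)\nmid k$. The only difference is cosmetic indexing of the geometric sum.
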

\begin{proof}
	Recalling that the multiplicative group of $\mathbb{F}$ is cyclic (see Fact \ref{fermat}), let $y$ be a generator for this group and let $(q-1)\! \not \big | \:k$. Then
	\[\sum_{\alpha \in \mathbb{F}} \alpha^k =  \sum_{i=1}^{q-1} y^{ik} = \frac{y^{qk}-y^k}{y^k - 1}\]
	where the denominator on the right hand side is assured to be nonzero by our assumption that $q-1$ not divide $k$. It follows from Fact \ref{fermat} that the numerator on the right hand side is zero, proving the ``otherwise" case. If $(q-1)\big |k$, then by Fact \ref{fermat} we have
	\[\sum_{\alpha \in \mathbb{F}} \alpha^k = \sum_{\substack{\alpha \in \mathbb{F} \\ \alpha \neq 0}} 1 = q-1 = -1\]
	completing the proof.\\
\end{proof}

\begin{fact} \label{degfact}
	If $F$ is any field, $g,h\in F[x]$ are relatively prime and $g/h \not \in F$, then
	\[[F(x):F(g/h)] = \mbox{Max}\{\mbox{deg }g, \mbox{deg }h\}\]
\end{fact}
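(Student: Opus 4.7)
The plan is to realize $x$ as a root of an explicit polynomial over $F(g/h)$ whose degree matches the claimed bound, and then show this polynomial is, up to a unit, the minimal polynomial of $x$. Set $t = g/h$ and consider
\[
P(Y) \;=\; g(Y) \;-\; t\, h(Y) \;\in\; F(t)[Y].
\]
By construction $P(x) = 0$, so $x$ is algebraic over $F(t)$ and $[F(x):F(t)] \le \deg_Y P$.

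I would then read off $\deg_Y P$. Writing $g = \sum a_i Y^i$ and $h = \sum b_i Y^i$, the coefficient of $Y^i$ in $P$ is $a_i - t b_i$. If $\deg g \ne \deg h$, the $Y$-degree of $P$ is immediately $\max\{\deg g,\deg h\}$. If $\deg g = \deg h = d$, the leading coefficient is $a_d - t b_d$; this is nonzero since otherwise $t = a_d/b_d \in F$, contradicting $g/h \notin F$. So in every case $\deg_Y P = \max\{\deg g,\deg h\}$.

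The main step is to show $P$ is irreducible in $F(t)[Y]$, because then (after normalizing) it is the minimal polynomial of $x$ over $F(t)$, giving the reverse inequality and hence equality. My approach is Gauss's lemma applied between $F[t][Y]$ and $F(t)[Y]$. First, viewing $P$ instead as an element of $F[Y][t]$, it has the form $g(Y) + (-h(Y))\,t$, a degree-$1$ polynomial in $t$ whose coefficients $g(Y), -h(Y)$ are coprime in $F[Y]$ by hypothesis; such a polynomial is irreducible in $F[Y][t] = F[t][Y]$. Second, $P$ is primitive as an element of $F[t][Y]$: any common divisor of the coefficients $a_i - t b_i$ in $F[t]$ has $t$-degree at most $1$, and a putative common linear factor $t - \beta$ would force $a_i = \beta b_i$ for all $i$, i.e. $g = \beta h$, again putting $g/h$ in $F$. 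With $P$ primitive and irreducible in $F[t][Y]$, Gauss's lemma delivers irreducibility in $F(t)[Y]$.

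The only delicate point is the double bookkeeping with Gauss's lemma at the two levels $F[Y][t]$ and $F[t][Y]$; once primitivity and irreducibility are verified in each, the chain of inequalities $\max\{\deg g,\deg h\} = \deg_Y P = [F(x):F(t)]$ closes and the fact follows.
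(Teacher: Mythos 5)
Your proposal is correct and follows essentially the same route as the paper: both realize $x$ as a root of the polynomial $g(Y)-t\,h(Y)$, which is linear in $t=g/h$, compute its $Y$-degree (your explicit check of the leading coefficient when $\deg g=\deg h$ is a small point the paper glosses over), and obtain irreducibility over $F(t)$ via Gauss's lemma from coprimality of $g$ and $h$. The one step to make explicit is that $t=g/h$ is transcendental over $F$ (because $g/h\notin F$), which is what justifies viewing $F[t][Y]$ as a genuine polynomial ring in two variables; the paper records this before replacing $g/h$ by an indeterminate.
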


\begin{proof}
	Let $\psi \in F(g/h)[t]$ be defined as 
	\[\psi(t) = \frac{g(x)}{h(x)} h(t) - g(t)\]
	
	Then $\psi\neq 0$ (since $g/h\not \in F$) and $\psi(x) = 0$. We wish to show that $\psi$ is irreducible over $F(g/h)$.\\
	
	Since $g/h$ is transcendental over $F$ (because $g/h\not \in F$ \cite[p.~233]{hun80}) we may replace $g/h$ with the indeterminate $z$. The resulting polynomial $zh(t)-g(t)$ is primitive over $F[z]$ (because $h,g$ are relatively prime) so by Gauss' lemma \cite[p.~163]{hun80} it suffices to show that $zh(t)-g(t)$ is irreducible over $F[z]$. Let us assume it is not. As this polynomial is linear in $z$, there must be a factorization of the form 
	\[zh(t)-g(t) = [z a(t)+b(t)]c(t)\]
	
	with $a,b,c \in F[t]$, which implies that $c(t)$ divides both $g$ and $h$. But this contradicts our assumption that $g,h$ are relatively prime, so $\psi$ is irreducible over $F(g/h)$.\\
	
	Since $x$ is a root of $\psi$, which we have just shown to be irreducible over $F(g/h)$, we have
	\[[F(x):F(g/h)] = [F(g/h)(x):F(g/h)] = \mbox{deg } \psi = \mbox{Max}\{\text{deg } f,\text{deg } g\}\]
	which completes the proof.
\end{proof}

\begin{fact}\label{aut}
	For any field $F$, the elements $\varphi$ of $\mbox{Aut}(F(x)/\mathbb{F})$ are determined by equations of the form
	\[\varphi(x) = \frac{ax+b}{cx+d}\]
	where $ad \neq bc$.
\end{fact}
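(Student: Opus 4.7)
The plan is to use Fact \ref{degfact} for the forward direction and an explicit construction for the converse.

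First, observe that any $\varphi \in \mathrm{Aut}(F(x)/F)$ is determined by $\varphi(x)$, since $F(x)$ is generated by $x$ over $F$, and any $F$-endomorphism of $F(x)$ is determined by its value at $x$. So the task reduces to characterizing which elements of $F(x)$ occur as $\varphi(x)$ for some automorphism $\varphi$.

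For the forward direction, suppose $\varphi \in \mathrm{Aut}(F(x)/F)$, and write $\varphi(x) = g(x)/h(x)$ with $g,h \in F[x]$ relatively prime. Since $\varphi$ is surjective, we have $F(\varphi(x)) = F(x)$, so $[F(x):F(\varphi(x))] = 1$. If $\varphi(x) \in F$, then $\varphi$ would be constant on $x$ and hence not surjective onto $F(x)$, contradiction; so $\varphi(x) \notin F$, and Fact \ref{degfact} applies to give $\max\{\deg g, \deg h\} = 1$. Hence $g(x) = ax+b$ and $h(x) = cx+d$ for some $a,b,c,d \in F$. The relative primality of $g$ and $h$ (together with the fact that at least one is nonzero of degree 1) forces $ad - bc \neq 0$: indeed, $ad = bc$ would make $ax+b$ and $cx+d$ scalar multiples of each other, so that $g/h \in F$, which we excluded.

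For the converse, given $a,b,c,d \in F$ with $ad \neq bc$, I would define a map $\varphi$ on $F(x)$ by $\varphi(x) = (ax+b)/(cx+d)$, extended in the natural way (formally, by the universal property of the polynomial ring and then localizing). To show $\varphi$ is an automorphism, I would exhibit an inverse: the standard computation shows that $\psi(x) = (dx-b)/(-cx+a)$ satisfies $\psi \circ \varphi = \mathrm{id}$ and $\varphi \circ \psi = \mathrm{id}$ on $x$, hence on all of $F(x)$. The condition $ad \neq bc$ is exactly what is needed to ensure $\psi$ is well-defined (the associated denominator is not identically zero).

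The main subtlety is the forward direction's use of Fact \ref{degfact}: one must know that $\varphi(x) \notin F$ (otherwise $\varphi$ is not surjective) before applying the degree formula. The converse is essentially a routine verification, though keeping track of the $ad-bc$ factor when composing the two Möbius maps is where sign and nonvanishing conditions must be handled carefully.
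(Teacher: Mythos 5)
Your forward direction is essentially the paper's: write $\varphi(x)=g/h$ in lowest terms, use surjectivity to get $F(g/h)=F(x)$, and apply Fact \ref{degfact} to force $\max\{\deg g,\deg h\}=1$; your extra remark that $\varphi(x)\notin F$ is needed before invoking the degree formula is a point the paper also implicitly relies on, and your justification of $ad\neq bc$ (proportionality would put $g/h$ in $F$) matches the paper's parenthetical claim, though you should handle the degenerate cases such as $a=0$ or $h$ constant explicitly rather than only the ``scalar multiple'' case. Where you genuinely diverge is the converse: the paper shows the substitution map is injective because $\frac{ax+b}{cx+d}\notin F$ is transcendental over $F$, and then gets surjectivity by a second application of Fact \ref{degfact} to its image, whereas you exhibit the explicit inverse substitution $x\mapsto \frac{dx-b}{-cx+a}$ and verify both composites are the identity. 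Both are valid; your route is more concrete and foreshadows the $\mathrm{PGL}_2$ description used later in Fact \ref{order}, while the paper's route reuses machinery already in place and avoids the composition bookkeeping. One small gap to close in your version: before you can compose anything, the map $\varphi$ (and likewise $\psi$) must be well defined on all of $F(x)$, i.e.\ the ``localize'' step requires that no nonzero polynomial is sent to $0$; this is exactly the transcendence of $\frac{ax+b}{cx+d}$ over $F$, which follows from $\frac{ax+b}{cx+d}\notin F$ (guaranteed by $ad\neq bc$), and is the observation the paper makes explicitly.
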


\begin{proof}	
	Let $\varphi \in \mbox{Aut}(F(x)/F)$, and let $g,h\in F[x]$ be relatively prime and such that $\varphi(x) = g/h$. Then $F(g/h) = \text{Im } \varphi = F(x)$, so that Fact \ref{degfact} implies that
	\[\text{Max}\{\text{deg } g,\text{deg }h\} = [F(x),F(g/h)] = 1.\]
	
	Thus, $g/h = \frac{ax+b}{cx+d}$ for some $a,b,c,d\in F$, with $ad \neq bc$ (this restriction is equivalent to the requirement that $ax+b$ and $cx+d$ be relatively prime and not both constant).\\
	
	Conversely, given $a,b,c,d\in F$ where $ad\neq bc$, let $\varphi: F(x) \rightarrow F(x)$ be the homomorphism induced by $x\mapsto \frac{ax+b}{cx+d}$. Since $\frac{ax+b}{cx+d}\not \in F$, it follows that $\frac{ax+b}{cx+d}$ is transcendental over $F$, so $\varphi$ is injective. Moreover, $\text{Im }\varphi = F(\frac{ax+b}{cx+d})$, so by Fact \ref{degfact} $[K(x):\text{Im }\varphi] = 1$, implying that $\varphi$ is surjective and hence $\varphi \in \mbox{Aut}(F(x)/F)$.
\end{proof}

\begin{fact}\label{order}
	$|G| = (q+1)q(q-1)$ 
\end{fact}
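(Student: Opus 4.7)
The plan is to count the automorphisms directly using the explicit parameterization of $G$ provided by Fact \ref{aut}. Since every element of $G$ is of the form $\varphi(x) = \frac{ax+b}{cx+d}$ with $a,b,c,d \in \mathbb{F}$ and $ad - bc \neq 0$, determining $|G|$ reduces to counting such expressions up to the natural equivalence.

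First I would observe that two quadruples $(a,b,c,d)$ and $(a',b',c',d')$ determine the same Möbius transformation if and only if they are scalar multiples of one another by a nonzero element of $\mathbb{F}$. One direction is immediate by clearing common factors in the numerator and denominator; the other direction can be verified by evaluating $\varphi(0) = b/d$ and $\varphi(\infty\text{-limit}) = a/c$ (formally, by comparing coefficients after cross-multiplying and using $ad \neq bc$ to rule out degenerate cases).

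Next, I would count the set of admissible quadruples. The condition $ad \neq bc$ is precisely the condition that the matrix $\bigl(\begin{smallmatrix} a & b \\ c & d \end{smallmatrix}\bigr)$ is invertible, so the count is $|GL_2(\mathbb{F})|$. A standard argument, counting the number of ways to choose a first column (any nonzero vector in $\mathbb{F}^2$, giving $q^2-1$ options) and then a second column linearly independent from the first (giving $q^2 - q$ options), yields $|GL_2(\mathbb{F})| = (q^2-1)(q^2-q)$.

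Finally, I would divide by the size of the scaling equivalence class, which is $q-1$ (the nonzero scalars in $\mathbb{F}$), to obtain
\[
|G| = \frac{(q^2-1)(q^2-q)}{q-1} = \frac{(q-1)(q+1)\cdot q(q-1)}{q-1} = (q+1)q(q-1).
\]
None of the steps are genuinely difficult; the only small subtlety is the rigorous verification that the scalar equivalence is exactly the kernel of the map from quadruples to automorphisms, which requires a careful use of the nondegeneracy condition $ad \neq bc$.
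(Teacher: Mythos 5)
Your proposal is correct and follows essentially the same route as the paper: the paper also parameterizes $G$ by invertible quadruples via Fact \ref{aut}, identifies the kernel of the map $GL_2(\mathbb{F})\to G$ with the nonzero scalars (i.e.\ $G\cong \mbox{PGL}_2(\mathbb{F})$), and the count $(q^2-1)(q^2-q)/(q-1)=(q+1)q(q-1)$ is exactly the standard formula the paper invokes, which you simply rederive by the column-counting argument.
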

This can be demonstrated in a number of ways. For convenience, we first show that 
\begin{eqnarray}\label{PGL}G \cong \mbox{PGL}_2(\mathbb{F}),
\end{eqnarray}
from which the statement follows immediately using the well known formula for $|\mbox{PGL}_2(\mathbb{F})|$. To show that \eqref{PGL} is true, let $\varphi_{abcd}$ be the element of $G$ which maps $x \mapsto \frac{ax+b}{cx+d}$. In view of Fact \ref{aut}, it is easily verified that the map $\mu: GL_2(\mathbb{F})\rightarrow G$, where $$\left (\begin{array}{cc} a & c \\ b & d \end{array}\right ) \overset{\mu}{\longmapsto} \varphi_{abcd},$$ is a surjective homomorphism. Since $\mbox{Ker}\,\mu$ consists of all multiples of the identity, \eqref{PGL} follows immediately.\\
\subsection{}\label{subsec}
In light of Fact \ref{aut}, we may write $f_k$ as follows
\begin{eqnarray}\label{kform}
	f_k = \sum_{\substack{a,b \in \mathbb{F}\\a \neq 0}} (ax + b)^k + \sum_{\substack{a,b,c \in \mathbb{F}\\ ac \neq b}} \left(\frac{ax+b}{x+c}\right)^k.
\end{eqnarray}
Thus, letting $h_k = \prod_{c\in \mathbb{F}}(x+c)^k = (x^q-x)^k$ (where the second equation follows from the corollary to Fact \ref{fermat}), we obtain $f_k = g_k/h_k$ for an appropriate $g_k \in \mathbb{F}[x]$. If the terms of degree $k$ in the first sum on the right hand side of \eqref{kform} do not sum to zero, the highest degree term of $g$ will come from
$$h_k\sum_{\substack{a,b \in \mathbb{F}\\a \neq 0}} (ax + b)^k = (x^q-x)^k\sum_{\substack{a,b \in \mathbb{F}\\a \neq 0}} (ax + b)^k $$
so that $\mbox{deg }g_k = k(q+1)$. Thus, $[\mathbb{F}(x):\mathbb{F}(f_k)] \leq k(q+1)$ by Fact \ref{degfact}. Since we need $[\mathbb{F}(x):\mathbb{F}(f_k)] = |G| = (q+1)q(q-1)$, this means that, roughly speaking, we are looking for some $k \geq q(q-1)$ such that the high degree terms of $$\sum_{\substack{a,b \in \mathbb{F}\\a \neq 0}} (ax + b)^k$$
do not sum to zero. It turns out that most choices of $k$ do not have the desired property, because
\begin{eqnarray}\label{degofg}
	\sum_{\substack{a,b \in \mathbb{F}\\a \neq 0}} (ax + b)^k = \sum_{\substack{a\in \mathbb{F}\\a \neq 0}} a^k\sum_{b \in \mathbb{F}} (x + b/a)^k  = \left (\sum_{a\in \mathbb{F}} a^k\right )\sum_{b \in \mathbb{F}} (x + b)^k
\end{eqnarray}
(where the second equation holds because $b/a$ runs through every element of $\mathbb{F}$, and the terms in which $a=0$ are included on the right hand side as they contribute nothing to the sum) and by Fact \ref{sum} the right hand side is zero whenever $(q-1)\big|k$. Hence, $m = (q-1)(q+1)$ is a promising candidate. The following 2 lemmas formalize these notions, and will finally allow us to prove Theorem \ref{thm1}.
\begin{lemma}\label{cond}	
	If $g,h \in \mathbb{F}[x]$ are polynomials such that $g/h \in E$ and $\mbox{deg }h < \mbox{deg }g < 2|G|$, then
	$$E = \mathbb{F}(g/h).$$
\end{lemma}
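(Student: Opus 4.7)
The idea is to combine Fact \ref{degfact} (which gives an exact formula for $[\mathbb{F}(x):\mathbb{F}(g/h)]$ in terms of the degrees of $g$ and $h$) with Fact \ref{div} (which says this index must be a multiple of $|G|$ whenever it is finite), and then to sandwich the index strictly between $0$ and $2|G|$ so that it is forced to equal $|G|$.

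First I would reduce to the case where $g$ and $h$ are relatively prime. If $d = \gcd(g,h)$ and we set $g' = g/d$, $h' = h/d$, then $g/h = g'/h'$ with $g',h'$ coprime; moreover the hypothesis $\deg h < \deg g$ is preserved (the difference $\deg g - \deg h$ is unchanged), and $\deg g'\leq \deg g < 2|G|$. Note also that $\deg g' > \deg h' \geq 0$ forces $g'/h' \not\in \mathbb{F}$, so Fact \ref{degfact} applies and yields
\[
[\mathbb{F}(x):\mathbb{F}(g/h)] \;=\; \max\{\deg g',\deg h'\} \;=\; \deg g' \;\leq\; \deg g \;<\; 2|G|.
\]

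Next I would invoke the hypothesis $g/h \in E$. Since $[\mathbb{F}(x):\mathbb{F}(g/h)]$ is finite (by the display above) and positive, Fact \ref{div} implies it is a positive integer multiple of $|G|$. Being strictly less than $2|G|$, this multiple must equal $|G|$ exactly, i.e.\ $[\mathbb{F}(x):\mathbb{F}(g/h)] = |G| = [\mathbb{F}(x):E]$. Combined with the tower $\mathbb{F}(g/h) \subseteq E \subseteq \mathbb{F}(x)$ and multiplicativity of degrees, this gives $[E:\mathbb{F}(g/h)] = 1$, so $E = \mathbb{F}(g/h)$, as required.

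There is no real obstacle here: the lemma is essentially a bookkeeping result that packages together the earlier Facts, and the only point requiring any care is the reduction to the coprime case so that Fact \ref{degfact} can be applied cleanly.
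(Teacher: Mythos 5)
Your proof is correct and follows essentially the same route as the paper: reduce to a coprime representation $g'/h'$, apply Fact \ref{degfact} to get $[\mathbb{F}(x):\mathbb{F}(g/h)] = \deg g' < 2|G|$, use $\deg h < \deg g$ to see $g/h \notin \mathbb{F}$ so the index is finite, and then apply Fact \ref{div} to force the index to equal $|G| = [\mathbb{F}(x):E]$, whence $E = \mathbb{F}(g/h)$. Your explicit check that passing to $g', h'$ preserves $\deg h' < \deg g'$ is a small point the paper leaves implicit, but otherwise the arguments coincide.
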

\begin{proof}
	Suppose that the above conditions hold, and let $g/h = g'/h'$ with $g',h'$ relatively prime. By Fact \ref{degfact},
	\[[\mathbb{F}(x):\mathbb{F}(g/h)] = \mbox{Max}\{\mbox{deg }g', \mbox{deg }h'\} = \mbox{deg }g' \leq \mbox{deg }g < 2|G|.\]
	Since $\mbox{deg }h < \mbox{deg }g$, it follows that $g/h \in E\setminus \mathbb{F}$, so that $[\mathbb{F}(x):\mathbb{F}(g/h)] < \infty$. Thus, Fact \ref{div} implies that $$|G| \Big| [\mathbb{F}(x):\mathbb{F}(g/h)] < 2|G| $$ proving that $[\mathbb{F}(x):\mathbb{F}(g/h)] = |G| = [\mathbb{F}(x):E]$. Since $\mathbb{F}(g/h) \subset E$, the desired result follows.
\end{proof}
\begin{lemma}\label{degreelem} Let $k \in \mathbb{N}$ be a multiple of $q-1$. Then there exists a polynomial $g_k \in  \mathbb{F}[x]$ such that
	$$f_k = g_k/(x^q-x)^k$$
	and $\mbox{deg }g_k = k(q+1)$.
\end{lemma}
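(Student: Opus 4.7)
The plan is to construct $g_k := h_k f_k$ (where $h_k := (x^q-x)^k$) explicitly and analyze its degree. Using the corollary to Fact \ref{fermat}, $h_k = \prod_{c\in\mathbb{F}}(x+c)^k$, so multiplying equation \eqref{kform} through clears all denominators:
\begin{equation*}
g_k = h_k\sum_{\substack{a,b\in\mathbb{F}\\a\neq 0}}(ax+b)^k \;+\; \sum_{\substack{a,b,c\in\mathbb{F}\\ac\neq b}}(ax+b)^k\prod_{c'\neq c}(x+c')^k.
\end{equation*}
Both pieces are manifestly in $\mathbb{F}[x]$, which establishes $g_k\in\mathbb{F}[x]$ and $f_k = g_k/h_k$.

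For the degree bound, each summand of the second piece has degree $k+k(q-1)=kq$, so that piece contributes degree at most $kq$. The first piece has degree at most $\deg h_k + k = k(q+1)$. Hence $\deg g_k\le k(q+1)$, and only the first piece can possibly contribute at the top degree.

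To establish equality, I would compute the coefficient of $x^{k(q+1)}$ in $g_k$, which (since $h_k$ is monic of degree $kq$) equals the coefficient of $x^k$ in the polynomial $P(x):=\sum_{a\neq 0,\,b}(ax+b)^k$. Equation \eqref{degofg} together with Fact \ref{sum} (using $(q-1)\mid k$) reduces $P$ to $-\sum_b(x+b)^k$, and the binomial theorem combined with further applications of Fact \ref{sum} to the inner power sums $\sum_b b^i$ lets us read off each coefficient.

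The main obstacle will be the characteristic-$p$ cancellations in this computation: the apparent leading coefficient involves a factor of $\sum_b 1 = q$, which vanishes in $\mathbb{F}$, so careful bookkeeping of the surviving terms---indexed by the divisibility condition $(q-1)\mid j$ on the binomial exponents and by which $\binom{k}{j}$ are nonzero modulo $p$ (via Lucas' theorem)---is needed to pin down the true leading term and verify it matches the claimed degree $k(q+1)$.
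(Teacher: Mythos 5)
Your construction of $g_k$, the bound $\deg g_k\le k(q+1)$, and the observation that only the first piece can reach top degree are all fine, and you have correctly isolated the crux: the coefficient of $x^{k(q+1)}$ in $g_k$ equals the coefficient of $x^k$ in $P(x)=\sum_{a\neq 0,\,b}(ax+b)^k$. But the step you defer (``pin down the true leading term and verify it matches the claimed degree $k(q+1)$'') cannot be completed, because that coefficient is identically zero: it equals $\sum_{a\neq 0,\,b\in\mathbb{F}}a^k=q\sum_{a\neq 0}a^k=0$ in characteristic $p$. This is not bookkeeping to be gotten around; it is an unconditional vanishing, and it shows the lemma is false as stated. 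Pushing your own computation one step further: for $(q-1)\mid k$, equation \eqref{degofg} and Fact \ref{sum} give $P=-\sum_b(x+b)^k$, whose coefficient of $x^{k-j}$ is $\binom{k}{j}$ when $0<j\le k$ and $(q-1)\mid j$, and $0$ otherwise (the $j=0$ term dies because $\sum_b b^0=q=0$). Hence $\deg g_k\le k(q+1)-(q-1)$, with equality when $\binom{k}{q-1}\not\equiv 0\pmod p$ (true for $k=m$ by Lucas' theorem, since every base-$p$ digit of $m=q^2-1$ is $p-1$; false e.g.\ for $q=3$, $k=4$, where the degree drops further). The paper itself corroborates the corrected value: Lemma \ref{gh} gives $\deg\bigl((x^q-x)^{q(q-1)}f_m\bigr)=|G|$, so $\deg\bigl((x^q-x)^m f_m\bigr)=|G|+q(q-1)=m(q+1)-(q-1)$, and in the $\mathbb{F}_2$ example one computes $(x^2+x)^3f_3=(x^2+x)(x^6+x^5+x^3+x+1)$, of degree $8$ rather than $k(q+1)=9$.

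To be fair, the gap is not yours alone: the paper omits the proof and points to the heuristic of section \ref{subsec}, which contains the same slip (the degree-$k$ terms of the first sum always cancel, and the displayed product \eqref{degofg} vanishes when $(q-1)\nmid k$, not when $(q-1)\mid k$). The repair is harmless. Prove the statement for $k=m$ (or add the hypothesis $\binom{k}{q-1}\not\equiv 0\pmod p$) with the corrected degree $\deg g_m=m(q+1)-(q-1)=|G|+q(q-1)$; your outline plus the coefficient computation above does exactly this. Theorem \ref{thm1} survives unchanged, because Lemma \ref{cond} only needs $\deg(x^q-x)^m<\deg g_m<2|G|$, and indeed $mq=|G|<|G|+q(q-1)<2|G|$.
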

As this is essentially a restatement of the arguments presented at the beginning of the present section, the proof is omitted.

\section{Proof of Main Results}
\begin{proof1}
	By Lemma \ref{degreelem}, we have $$f_m = g_m/(x^q-x)^m$$ where $\mbox{deg }g_m = m(q+1) = |G| + m$ (by Fact \ref{order}). Since $|G| < |G|+m < 2|G|$, an application of Lemma \ref{cond} completes the proof.
\end{proof1}

\begin{example}
	\em Let $\mathbb{F} = \mathbb{F}_2 = \mathbb{Z}_2$, so that $|G| = 6$ and 
	\[f_m = f_3 = x^3 + (x+1)^3 + \frac{1}{x^3} + \frac{1}{(x+1)^3} + \left (\frac{x+1}{x}\right )^3 + \left (\frac{x}{x+1}\right )^3 \]
	which, with a little work, simplifies to
	\[f_m = \frac{x^6+x^5+x^3+x+1}{x^4+x^2}.\] 
\end{example}

It is clear that such computations are prohibitively complex for larger fields. In order to derive the simple formula of Theorem \ref{formula}, the following technical lemmas are required. 

\begin{lemma}\label{AB} For $A, B \in \mathbb{F}(x)$, $k\in \mathbb{N}$ we have
	\[(A-B)^{p^k-1} = A^{p^k-1}+A^{p^k-2}B + \ldots +A B^{p^k-2}+B^{{p^k-1}}\]
\end{lemma}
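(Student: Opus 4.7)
The plan is to combine the Frobenius endomorphism (Fact \ref{frob}), iterated $k$ times, with the classical factorization $X^n - Y^n = (X - Y)\sum_{i=0}^{n-1} X^{n-1-i} Y^i$ specialized at $n = p^k$.

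First, I would observe that iterating Fact \ref{frob} shows the $p^k$-th power map is also an endomorphism of $\mathbb{F}(x)$, whence $(A-B)^{p^k} = A^{p^k} + (-B)^{p^k} = A^{p^k} - B^{p^k}$. The second equality is immediate when $p = 2$ (since $-1 = 1$) and follows from $(-1)^{p^k} = -1$ when $p$ is odd.

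Second, the elementary polynomial identity gives
\[
A^{p^k} - B^{p^k} = (A - B)\bigl(A^{p^k-1} + A^{p^k-2} B + \cdots + A B^{p^k-2} + B^{p^k-1}\bigr),
\]
an identity valid in any commutative ring, hence in $\mathbb{F}(x)$.

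Third, I would chain these: writing $(A-B)^{p^k} = (A-B) \cdot (A-B)^{p^k-1}$ and equating with the right-hand side of the factorization yields
\[
(A-B)\cdot (A-B)^{p^k-1} = (A-B)\bigl(A^{p^k-1} + A^{p^k-2}B + \cdots + B^{p^k-1}\bigr).
\]
Cancelling the factor $A - B$ in the integral domain $\mathbb{F}(x)$ finishes the proof whenever $A \neq B$. In the trivial case $A = B$, the left side is $0$ while the right side is $p^k \cdot A^{p^k-1} = 0$ in characteristic $p$, so the identity holds there as well.

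There is no real obstacle: the content is just Frobenius plus a geometric-series factorization. The only point requiring a moment of care is the $A = B$ case, handled above, and the sign of $(-B)^{p^k}$, handled by parity of $p^k$.
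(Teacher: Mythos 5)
Your proof is correct and follows essentially the same route as the paper: iterate the Frobenius endomorphism to get $(A-B)^{p^k}=A^{p^k}-B^{p^k}$, combine with the geometric-series factorization of $A^{p^k}-B^{p^k}$ (the paper divides by $A-B$ where you cancel it, a cosmetic difference), and treat $A=B$ separately via $p^kA^{p^k-1}=0$ in characteristic $p$. Your explicit handling of the sign of $(-B)^{p^k}$ is a small point the paper leaves implicit, but otherwise the arguments coincide.
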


\begin{proof}
	If $A=B$, then
	$$A^{p^k-1}+A^{p^k-2}B+ \ldots + A B^{p^k-2}+ B^{{p^k-1}} = p^k A^{p^k-1} = 0 = (A-B)^{p^k-1}$$
	and the statement holds. If $A \neq B$, then
	\begin{eqnarray*}
		(A-B)^{p^k-1} &=& \frac{(A-B)^{p^k}}{A - B} = \frac{A^{p^k}-B^{p^k}}{A - B} = A^{p^k-1}+A^{p^k-2}B+ \ldots + A B^{p^k-2}+ B^{{p^k-1}}
	\end{eqnarray*}
	(where the second equation follows from Fact \ref{frob}) completing the proof.\\
\end{proof}
\begin{lemma}
	If $(q-1)\big |k$, then
	\begin{eqnarray}\label{tech1}
		f_k = 1 - \left (1+\sum_{b \in \mathbb{F}} (x - b)^k \right) \left ( 1 + \sum_{c \in \mathbb{F}} \frac{1}{(x - c)^k}\right ).\end{eqnarray}
\end{lemma}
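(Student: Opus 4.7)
The plan is to start from the decomposition of $f_k$ already recorded in equation~\eqref{kform}, rearrange the summands of each piece using the hypothesis $(q-1)\mid k$, and show that the two sums contribute $-S_1$, $-S_2$, and $-S_1 S_2$ respectively, where we write $S_1 = \sum_{b\in\mathbb{F}}(x-b)^k$ and $S_2 = \sum_{c\in\mathbb{F}}(x-c)^{-k}$. The target identity is equivalent to $f_k = -S_1 - S_2 - S_1 S_2$, so once the three contributions are identified the statement follows by simple algebra.

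First I would handle the affine sum $\sum_{a\neq 0,\, b}(ax+b)^k$. Since $(q-1)\mid k$, Fact~\ref{fermat} gives $a^k=1$ for every $a\in\mathbb{F}^{\times}$, so this sum equals $\sum_{a\neq 0,\,b}(x+b/a)^k$. Re-indexing the inner sum by $b'=b/a$ collapses the expression to $(q-1)\sum_{b\in\mathbb{F}}(x+b)^k$, and because $q\equiv 0\pmod p$ and $\mathbb{F}$ is closed under negation this equals $-\sum_b (x-b)^k = -S_1$.

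Next I would rewrite the second sum of \eqref{kform} by the substitution $\frac{ax+b}{x+c}=a+\frac{b-ac}{x+c}$. Setting $e = b - ac$, the condition $ac\neq b$ becomes $e\neq 0$, and the triple $(a,b,c)$ with $ac\neq b$ is in bijection with $(a,c,e)\in\mathbb{F}\times\mathbb{F}\times\mathbb{F}^{\times}$. Splitting on whether $a=0$, the $a=0$ piece is $\sum_{c\in\mathbb{F},\,e\neq 0} e^k(x+c)^{-k} = (q-1)\sum_c (x+c)^{-k} = -S_2$. For the $a\neq 0$ piece I would factor $a^k=1$ to write each term as $\bigl(1+\tfrac{e/a}{x+c}\bigr)^k$. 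As $(a,e)$ ranges over $\mathbb{F}^{\times}\times\mathbb{F}^{\times}$, the ratio $f=e/a$ ranges over $\mathbb{F}^{\times}$ exactly $q-1$ times each, so this piece reduces to $(q-1)\sum_{c\in\mathbb{F},\,f\neq 0}\bigl(1+\tfrac{f}{x+c}\bigr)^k$, and after the change of variables $c'=-c$, $b'=-(c+f)$ this becomes $-\sum_{b'\neq c'}\bigl(\tfrac{x-b'}{x-c'}\bigr)^k$. Adding back the diagonal $b'=c'$ (which contributes $q\equiv 0$) identifies this sum with $S_1 S_2$, so the $a\neq 0$ piece contributes $-S_1 S_2$.

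Combining the three contributions gives $f_k = -S_1 - S_2 - S_1 S_2 = 1 - (1+S_1)(1+S_2)$, as required. The main obstacle is purely bookkeeping: keeping track of the reparametrizations $e = b-ac$ and then $(b',c') = (-(c+f),-c)$, and making consistent use of the mod-$p$ identities $a^k=1$, $q\equiv 0$, and $q-1\equiv -1$. Once these are marshaled carefully the identity drops out with no nontrivial combinatorial input.
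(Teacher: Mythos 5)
Your proof is correct and takes essentially the same route as the paper: both start from the decomposition \eqref{kform}, split the fractional sum according to whether the numerator is constant ($a=0$), and use $(q-1)\mid k$ to reduce the three pieces to $-S_1$, $-S_2$ and $-S_1S_2$ (in your notation), the last after discarding the diagonal $b=c$, which contributes $q\equiv 0$. Your partial-fraction reindexing $e=b-ac$ and your use of $a^k=1$ together with the count $q-1\equiv -1$ are only cosmetic variants of the paper's device of factoring out $\sum_{a\in\mathbb{F}}a^k=-1$ via Fact \ref{sum}.
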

\begin{proof}
	First, note that the right hand side of \eqref{tech1} is equal to 
	\begin{eqnarray}\label{plus}
		1 - \left (1+\sum_{b \in \mathbb{F}} (x + b)^k \right) \left ( 1 + \sum_{c \in \mathbb{F}} \frac{1}{(x + c)^k}\right )
	\end{eqnarray}
	since $-b,-c$ run through all elements of $\mathbb{F}$.
	Starting with equation \eqref{kform} in section \ref{subsec}, let us separate the terms in the rightmost sum according to whether or not the numerator is constant, obtaining
	$$f_k = \sum_{\substack{a,b \in \mathbb{F}\\a \neq 0}} (ax + b)^k + \sum_{\substack{b,c \in \mathbb{F}\\ b \neq 0}} \left(\frac{b}{x+c}\right)^k + \sum_{\substack{a,b,c \in \mathbb{F}\\a \neq 0 \\ ac \neq b}} \left(\frac{ax+b}{x+c}\right)^k.$$
	Applying the same reasoning as in equation \eqref{degofg} of section \ref{subsec}, the right hand side becomes
	\begin{eqnarray*}\left (\sum_{a\in \mathbb{F}} a^k \right ) \left ( \sum_{b \in \mathbb{F}} (x + b)^k + \sum_{c \in \mathbb{F}} \left(\frac{1}{x+c}\right)^k + \sum_{\substack{b,c \in \mathbb{F}\\b \neq c}} \left(\frac{x+b}{x+c}\right)^k\right )
	\end{eqnarray*}
	and by our assumption we may apply Fact \ref{sum} to obtain
	\begin{eqnarray*}
		f_k = - \sum_{b \in \mathbb{F}} (x + b)^k - \sum_{c \in \mathbb{F}} \left(\frac{1}{x+c}\right)^k - \sum_{\substack{b,c \in \mathbb{F}\\b \neq c}} \left(\frac{x+b}{x+c}\right)^k.
	\end{eqnarray*}
	which is precisely the expression obtained by expanding the right hand side of \eqref{plus}.
\end{proof}

\begin{lemma}\label{gh}
	Let us define
	\begin{eqnarray*}
		g &=& (x^q-x)^{q(q-1)}f_m \notag\\
		h &=& (x^q-x)^{q(q-1)}
	\end{eqnarray*}
	then
	\begin{enumerate}
		\item \label{poly} $g \in \mathbb{F}[x]$	
		\item \label{deg}$\mbox{deg }g = |G|$
		\item \label{relprime} $g$ and $h$ are relatively prime
		\item \label{trivial}$f_m = g/h$.
	\end{enumerate}
\end{lemma}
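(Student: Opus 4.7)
The plan is to start from the expression $f_m = 1 - AB$ furnished by the previous lemma, where
\[
A = 1 + \sum_{b \in \mathbb{F}}(x+b)^m \quad\text{and}\quad B = 1 + \sum_{c \in \mathbb{F}}(x+c)^{-m}
\]
(the reindexing $-b,-c \mapsto b,c$ is harmless because $\mathbb{F}$ is closed under negation). I would first clear the denominators in $B$ by observing that $B' := (x^q-x)^m B = (x^q-x)^m + \sum_{c}\prod_{d \neq c}(x+d)^m$ lies in $\mathbb{F}[x]$. Then
\[
g = (x^q-x)^{q(q-1)} f_m = (x^q-x)^{q(q-1)} - \frac{A\,B'}{(x^q-x)^{q-1}},
\]
so proving (1) reduces to showing that $(x^q-x)^{q-1}$ divides $A$.

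To verify this, I would fix $c \in \mathbb{F}$, substitute $y = x+c$, expand $A$ with the binomial theorem, and apply Fact~\ref{sum} to the inner sums $\sum_b b^{m-i}$. These vanish except when $i \in \{0, q-1, 2(q-1), \ldots, q(q-1)\}$, and the $i = 0$ contribution cancels the leading $1$, leaving
\[
A = -\sum_{j=1}^{q} \binom{m}{j(q-1)}\, y^{j(q-1)},
\]
which is manifestly divisible by $y^{q-1} = (x+c)^{q-1}$. Taking the product over $c$ yields $(x^q-x)^{q-1} \mid A$. Since the same formula shows $\deg A \leq q(q-1) = \deg (x^q-x)^{q-1}$, the quotient $c_0 := A/(x^q-x)^{q-1}$ is a constant in $\mathbb{F}$; Lucas' theorem, using that $m = q^2-1$ has every base-$p$ digit equal to $p-1$, shows the relevant binomial coefficients survive the reduction mod $p$, so $c_0 \neq 0$. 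This proves (1): $g = (x^q-x)^{q(q-1)} - c_0 B'$ is a polynomial.

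The remaining claims follow from degree counts and evaluations at the roots of $x^q-x$. For (2), the leading term of $B'$ comes from $(x^q-x)^m$, so $\deg B' = qm = q^3-q = |G|$, strictly exceeding $\deg (x^q-x)^{q(q-1)} = q^2(q-1)$; hence $\deg g = qm = |G|$. For (3) it suffices to check $g(-c) \neq 0$ for each $c \in \mathbb{F}$, since $h = (x^q-x)^{q(q-1)}$ has precisely those roots. At $x = -c$, both $(x^q-x)^{q(q-1)}$ and the $(x^q-x)^m$ piece of $B'$ vanish, while in $\sum_{c'}\prod_{d \neq c'}(x+d)^m$ only the $c' = c$ summand survives, contributing $\prod_{d \neq c}(d-c)^m = \bigl(\prod_{\alpha \in \mathbb{F}^{\times}}\alpha\bigr)^m = (-1)^m = 1$. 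Thus $g(-c) = -c_0 \neq 0$, giving (3). Claim (4) is immediate from the definitions of $g$ and $h$.

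The main obstacle is establishing the divisibility $(x^q-x)^{q-1} \mid A$ together with the nonvanishing of the constant $c_0$; these rely on combining Fact~\ref{sum} with the arithmetic $(q-1) \mid m$, plus a Lucas-style argument to control the key binomial coefficients in characteristic $p$. Everything else is routine degree and evaluation bookkeeping.
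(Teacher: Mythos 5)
Your argument is correct, and it diverges from the paper's proof in two substantive ways. Both proofs start from the factorization $f_m = 1 - AB$ of \eqref{tech1}, but where you only establish the divisibility $(x^q-x)^{q-1}\mid A$ and then invoke Lucas' theorem to see that the constant $c_0=A/(x^q-x)^{q-1}$ is nonzero, the paper pins $A$ down exactly: expanding each $(x-b)^m$ with Lemma \ref{AB} (so that no binomial coefficients ever appear) and applying Fact \ref{sum} gives $A=-(x^q-x)^{q-1}$, i.e.\ $c_0=-1$. That exact value is reused in the proof of Theorem \ref{formula}, so the paper's route is more economical in context and stays within its own toolkit, whereas yours imports an external theorem but needs only the crude fact $c_0\neq 0$. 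The second difference is your treatment of item (3): you prove coprimality directly by evaluating $g$ at the roots of $h$ (all of which lie in $\mathbb{F}$, where the only surviving summand of $B'$ contributes $\prod_{\alpha\in\mathbb{F}^{\times}}\alpha^m=1$ -- a value you get via Wilson's theorem, though it follows already from Fact \ref{fermat} since $\alpha^m=(\alpha^{q-1})^{q+1}=1$), so that $g(\alpha)=-c_0\neq 0$ for every $\alpha\in\mathbb{F}$; the paper instead derives (3) from (1) and (2) in one line, since a common factor would force $[\mathbb{F}(x):\mathbb{F}(f_m)]<|G|$ by Fact \ref{degfact}, contradicting Theorem \ref{thm1}. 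Your version buys a lemma whose proof is independent of Theorem \ref{thm1}, at the price of the extra evaluation work and the need for $c_0\neq 0$. One small point to tighten: when you keep only the indices $i\in\{0,q-1,\dots,q(q-1)\}$, the discarded index with $m-i=0$ is not literally covered by Fact \ref{sum}; it vanishes because $\sum_b b^0=q=0$, and this deserves an explicit word (the paper makes the analogous remark for its $i=0$ term).
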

\begin{proof}
	Statement \eqref{trivial} is trivial, and \eqref{relprime} is a consequence of \eqref{poly} and \eqref{deg}, because if $g$ and $h$ are polynomials which are not relatively prime then (by Fact \ref{degfact}) $$[\mathbb{F}(x):\mathbb{F}(g/h)] < |G|$$ contradicting Theorem \ref{thm1}. To prove \eqref{poly} and \eqref{deg}, let us consider the expression
	\begin{eqnarray*}
		1+\sum_{b \in \mathbb{F}} (x-b)^m
	\end{eqnarray*}
	that is, the left factor in \eqref{tech1} with $k = m$. Applying Lemma \ref{AB} to expand the summands, we obtain
	\begin{eqnarray*}1 + \sum_{b \in \mathbb{F}} \sum_{i=0}^m x^{m-i}b^i  = 1 + \sum_{i=0}^m x^{m-i} \sum_{b \in \mathbb{F}} b^i.
	\end{eqnarray*}
	By Lemma \ref{sum}, the terms on the right hand side vanish except those in which  $i$ is a multiple of $q-1$. Of those terms, all become $-x^{m-i}$ except for $i=0$, which also vanishes since $\sum_{b \in \mathbb{F}} b^0 = \sum_{b \in \mathbb{F}} 1 = 0$. We thus obtain 
	\[-(x^{q(q-1)} + x^{(q-1)(q-1)} + \ldots + x^{q-1})\]
	which is equal to 
	\begin{eqnarray*}
		-(x^q-x)^{q-1}
	\end{eqnarray*}
	as may be seen by applying Lemma \ref{AB} to the latter expression. Substituting this into \eqref{tech1} (with k = m), we obtain
	\begin{eqnarray}\label{form1}f_m = 1+(x^q-x)^{q-1}\left ( 1 + \sum_{c \in \mathbb{F}} \frac{1}{(x - c)^m}\right ) = 1 + (x^q-x)^{q-1} + \sum_{c \in \mathbb{F}} \frac{(x^q-x)^{q-1}}{(x-c)^m}
	\end{eqnarray}
	and since the denominators in the sum on the right are all factors of $$\prod_{\beta \in \mathbb{F}} (x-c)^m = (x^q-x)^m = (x^q - x)^{q(q-1)}(x^q - x)^{q-1}$$
	it follows that $g = (x^q-x)^{q(q-1)}f_m \in \mathbb{F}[x]$ and $\mbox{deg }g = (q+1)q(q-1) = |G|$, proving \eqref{poly} and \eqref{deg}.
\end{proof}
\begin{lemma}\label{binom}
	\begin{eqnarray}\label{d} \sum_{i = j}^{m} \binom{i(q-1)}{j(q-1)} = \begin{cases} 0 &\mbox{for } j = 0,1,\ldots, q \\ 1 &\mbox{for } j = q\!+\!1 , q\!+\!2,\ldots, m\end{cases}
	\end{eqnarray}
	where $\binom{i(q-1)}{j(q-1)}$ denotes the binomial coefficient.
\end{lemma}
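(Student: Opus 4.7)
The plan is to recognize $S_j := \sum_{i=j}^m \binom{i(q-1)}{j(q-1)}$ as a coefficient-extraction problem. Since $\binom{i(q-1)}{j(q-1)} = [x^{j(q-1)}](1+x)^{i(q-1)}$ vanishes for $i < j$,
\[S_j = \bigl[x^{j(q-1)}\bigr] P(x), \qquad P(x) := \sum_{i=0}^{m} (1+x)^{i(q-1)} \in \mathbb{F}[x].\]
I would then massage $P(x)$ into a form from which the relevant coefficients can be read off directly.

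Applying Lemma \ref{AB} with $A = (1+x)^{q-1}$, $B = 1$, and $p^k = q^2$ recognizes $P(x)$ as a telescoping sum and identifies $P(x) = w^m$, where $w := (1+x)^{q-1} - 1$. The relation $w(1+x) = (1+x)^q - (1+x) = x^q - x$ (via iterated Fact \ref{frob}) together with $(1+x)^{m+1} = (1+x)^{q^2} = 1 + x^{q^2}$ yields the crucial polynomial identity
\[w^m \bigl(1 + x^{q^2}\bigr) = (1+x)(x^q - x)^m.\]
A second application of Lemma \ref{AB} (now with $A = x^{q-1}$, $B = 1$) gives $(x^{q-1} - 1)^m = \sum_{i=0}^m x^{i(q-1)}$ and hence $(x^q - x)^m = \sum_{i=0}^m x^{m + i(q-1)}$.

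The case $0 \leq j \leq q$ is then immediate: $j(q-1) \leq q(q-1) < m$, and since $w = -x + O(x^2)$ in characteristic $p$, the polynomial $w^m$ has minimum degree $m$, so $S_j = 0$. For $j \in [q+1, m]$, I would iterate the recursion
\[\bigl[x^{j(q-1)}\bigr] w^m + \bigl[x^{j(q-1) - q^2}\bigr] w^m = \bigl[x^{j(q-1)}\bigr] \Bigl((1+x)\sum_{i=0}^{m} x^{m + i(q-1)}\Bigr)\]
to express $S_j$ as a finite alternating sum $\sum_{k \geq 0} (-1)^k \bigl[x^{j(q-1) - kq^2}\bigr]\bigl((1+x)\sum_i x^{m+i(q-1)}\bigr)$. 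The main obstacle will be showing that exactly one summand is nonzero. Using $\gcd(q-1, q) = 1$, the equation $m + i(q-1) + \epsilon = j(q-1) - kq^2$ (with $\epsilon \in \{0,1\}$, $i \in [0,m]$) forces divisibility constraints modulo $q-1$, giving $k = (q-1)\ell$ (when $\epsilon = 0$) or $k+1 = (q-1)\ell'$ with $\ell' \geq 1$ (when $\epsilon = 1$); an index analysis then shows that under the constraint $j \leq m$ the only valid solution is $(k, \epsilon, i) = (0, 0, j - q - 1)$, contributing a single $+1$.
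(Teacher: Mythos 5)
Your argument is correct, but it takes a genuinely different route from the paper's. Both proofs begin by reading $\sum_{i=j}^m\binom{i(q-1)}{j(q-1)}$ as the coefficient of $x^{j(q-1)}$ in $\sum_{i=0}^m(1+x)^{i(q-1)}$, but from there the paper relies on the translation identity $\sum_{i=0}^q(x+\alpha)^{i(q-1)}=\sum_{i=0}^q x^{i(q-1)}$, splits the full sum into a product of two partial geometric sums minus correction terms, and discards the terms whose exponents are not multiples of $q-1$ until the answer can be read off directly. You instead collapse the entire sum in one stroke via Lemma \ref{AB} into the closed form $w^m$ with $w=(1+x)^{q-1}-1$; this makes the case $j\leq q$ essentially free (since $w$ has lowest-order term $-x$, so $w^m$ has no terms below degree $m>q(q-1)$), and it replaces the paper's identity \eqref{maniac} by the functional equation $w^m(1+x^{q^2})=(1+x)(x^q-x)^m$ plus the coefficient recursion $c_N=d_N-c_{N-q^2}$. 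The price is the concluding index analysis, which you only sketch, but the sketch is completable exactly as stated: in the case $\epsilon=0$ one gets $i=j-q-1-\ell q^2$, which forces $\ell=0$ since $j\leq m$, and in the case $\epsilon=1$ one gets $i=j-\ell' q^2<0$, so the unique contribution is $(k,\epsilon,i)=(0,0,j-q-1)$ with sign $+1$. (Note that when $q=2$ the congruences modulo $q-1$ are vacuous, but the size bounds alone already force $k=0$, $\epsilon=0$, so nothing breaks.) In short, your proof trades the paper's tidy but ad hoc rearrangement of geometric sums for a cleaner closed form plus a small amount of bookkeeping at the end, and it has the minor aesthetic advantage of not needing the auxiliary identity \eqref{maniac}.
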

\begin{proof}
	We will make use of the following identity\footnote{This identity, and its derivation, are due to MathManiac on the math.stackexchange forum; we have generalized from $\alpha = 1$ to all $\alpha \in \mathbb{F}$ (see bibliography).}, which holds for any $\alpha \in \mathbb{F}$
	\begin{eqnarray}\label{maniac}
		\sum_{i=0}^q (x+\alpha)^{i(q-1)} = \sum_{i=0}^q x^{i(q-1)}
	\end{eqnarray}
	and may be derived as follows
	$$\sum_{i=0}^q (x+\alpha)^{i(q-1)} = \frac{(x+\alpha)^{q^2-1} - 1}{(x+\alpha)^{q-1}-1} = \frac{\frac{x^{q^2}+\alpha}{x+\alpha} - 1}{\frac{x^q+\alpha}{x+\alpha} - 1} = \frac{x^{q^2}-x}{x^q-x} = \frac{x^{q^2-1}-1}{x^{q-1}-1} = \sum_{i=0}^q x^{i(q-1)} $$
	where the second equation follows from Fact \ref{frob}.\\
	
	The left hand side of \eqref{d} is equal to the coefficient of $x^{j(q-1)}$ in
	\begin{eqnarray*}(x+1)^{m(q-1)} + (x+1)^{(m-1)(q-1)} + \ldots + (x+1)^{q-1} + 1
	\end{eqnarray*}
	which may be written as
	\begin{eqnarray*}
		\left (\sum_{i=0}^q (x+1)^{i(q-1)} \right ) \left ( \sum_{i=0}^{q-1} (x+1)^{iq(q-1)} \right ) - \sum_{i=1}^q (x+1)^{iq(q-1)}
	\end{eqnarray*}
	(where the rightmost sum corrects for a $(x+1)^{q^2(q-1)}$ term as well as ``double counted" terms). Using \eqref{maniac} on the first and third sums (as well as Fact \ref{frob} to deal with the $q$ in the powers of the third sum), this becomes
	\begin{eqnarray}\label{cake}
		\left (\sum_{i=0}^q x^{i(q-1)} \right ) \left ( \sum_{i=0}^{q-1} (x+1)^{iq(q-1)} \right ) - \sum_{i=1}^q x^{iq(q-1)}.
	\end{eqnarray} 
	Applying Fact \ref{frob} and \eqref{maniac} to the second sum yields
	\begin{eqnarray*}&&\sum_{i=0}^{q-1} (x+1)^{iq(q-1)} =  \left ( \sum_{i=0}^{q-1} (x+1)^{i(q-1)} \right )^q = \left ( \sum_{i=0}^{q} x^{i(q-1)} - (x+1)^{q(q-1)} \right )^q \\
		&=& \sum_{i=0}^{q} x^{iq(q-1)} - (x+1)^{q^2(q-1)} = \sum_{i=0}^{q} x^{iq(q-1)} - (x^{q^2}+1)^{q-1}
	\end{eqnarray*}
	whose rightmost term we may expand using Lemma \ref{AB} to obtain
	\begin{eqnarray*}
		&&\sum_{i=0}^{q} x^{iq(q-1)} - (x^{(q-1)q^2} - x^{(q-2)q^2} + \ldots + x^{2q^2} - x^{q^2} + 1) \\ &=& \sum_{i=1}^{q-1} x^{iq(q-1)} - (x^{(q-2)q^2} + \ldots + x^{2q^2} - x^{q^2}).
	\end{eqnarray*}
	Since none of the terms in parentheses have a power divisible by $q-1$, removing them from \eqref{cake} will not affect the coefficient of $x^{j(q-1)}$. Thus, the coefficient of $x^{j(q-1)}$ in \eqref{cake} is equal to the same coefficient in
	\begin{eqnarray*}
		\left (\sum_{i=0}^q x^{i(q-1)}\right ) \left ( \sum_{i=1}^{q-1} x^{iq(q-1)} \right ) - \sum_{i=1}^q x^{iq(q-1)} = x^{m(q-1)} + x^{(m-1)(q-1)}+ \ldots +  x^{(q+1)(q-1)}
	\end{eqnarray*}
	from which the result follows.
\end{proof}

We are now in a position to prove Theorem \ref{formula}.

\begin{proof2}
	
	Let $g,h$ be as in Lemma \ref{gh}. A straightforward application of Fact \ref{frob} and Lemma \ref{AB} shows that $h = \sum_{i = 1}^q x^{iq(q-1)} $, so it remains only to show that $$g =  \sum_{i = 0}^{q(q+1)} \theta_i x^{i(q-1)}.$$
	From the definition of $g$ and \eqref{form1} (in Lemma \ref{gh}) we have
	\begin{eqnarray}\label{g}
		g =  (x^q-x)^{q(q-1)} + (x^q-x)^m +  \sum_{c \in \mathbb{F}} \left (\frac{x^q-x}{x-c}\right )^m.
	\end{eqnarray}
	Observing that
	$$\frac{x^q - x}{x-c} = x^{q-1} + c x^{q-2} + \ldots + c^{q-2}x = (x^{q-1} + c x^{q-2} + \ldots + c^{q-2}x + 1) - 1$$
	an application of Lemma \ref{AB} to the right hand side yields $\frac{x^q - x}{x-c} = (x-c)^{q-1} - 1$, so that
	\begin{eqnarray*}
		\sum_{c \in \mathbb{F}} \left (\frac{x^q-x}{x-c}\right )^m = \sum_{c \in \mathbb{F}} \left ((x-c)^{q-1} - 1 \right )^m = \sum_{c \in \mathbb{F}} \sum_{i = 0}^m (x-c)^{i(q-1)} = \sum_{c \in \mathbb{F}} \sum_{i = 0}^m (x + c)^{i(q-1)}
	\end{eqnarray*}
	where the second equation follows from another application of Lemma \ref{AB}, and the third holds because $c$ runs through all elements of $\mathbb{F}$. The coefficient of $x^j$ on the right hand side is equal to
	\begin{eqnarray*}
		\sum_{c \in \mathbb{F}} \sum_{\frac{j}{q-1} \leq i \leq m} \binom{i(q-1)}{j} c^{i(q-1) - j} = \sum_{\frac{j}{q-1} \leq i \leq m} \binom{i(q-1)}{j} \sum_{c \in \mathbb{F}} c^{i(q-1) - j}  
	\end{eqnarray*}
	and by Lemma \ref{sum} the terms on the right hand side are zero whenever $(q-1) \nmid j$, and $-\binom{i(q-1)}{j}$ otherwise. Thus (with an appropriate reindexing of $j$) the coefficient of $x^{j(q-1)}$ is
	\begin{eqnarray*}\label{coeff}
		-\sum_{i = j}^m \binom{i(q-1)}{j(q-1)}
	\end{eqnarray*}
	with all other coefficients zero. Applying Lemma \ref{binom} to this expression thus produces all of the nonzero coefficients of $\sum_{c \in \mathbb{F}_q} \left (\frac{x^q-x}{x-c}\right )^m$, so that
	\[\sum_{c \in \mathbb{F}_q} \left (\frac{x^q-x}{x-c}\right )^m = -(x^{(q+1)(q-1)} + x^{(q+2)(q-1)} + \ldots + x^{m(q-1)}).\]
	Substituting this into \eqref{g}, the desired result follows from a straightforward expansion.
\end{proof2}

\begin{example}
	\em Let $\mathbb{F} = \mathbb{F}_3$. Then $|G| = 24$, $m = 8$ and by Theorem 2 we have that $E$ is generated over $\mathbb{F}$ by the element
	$$f_8 = \frac{x^{24}+x^{22}+x^{20}+2x^{18}+x^{16}+x^{14}+2x^{12}+x^{10}+x^8+2x^6+x^4+x^2+1}{x^{18}+x^{12}+x^6}.$$
\end{example}

\begin{example}
	\em Let $\mathbb{F} = \mathbb{F}_4$. Then $|G| = 60$, $m = 15$ and $E$ is generated over $\mathbb{F}$ by
	$$\frac{x^{60}+x^{57}+x^{54}+x^{51}+x^{45}+\ldots +x^{15}+x^9+x^6+x^3+1}{x^{48}+x^{36}+x^{18}+x^{12}}$$
	where the terms in the numerator with coefficient 2 (i.e. those appearing in the denominator) vanish since $\mbox{char}(\mathbb{F}_4) = 2$.
\end{example}


   
\end{document}